    \newtheorem{theorem}{Theorem}
    \newtheorem{lemma}[theorem]{Lemma}
    \theoremstyle{definition}
    \newtheorem{definition}[theorem]{Definition}
    \theoremstyle{remark}
    \newtheorem{remark}[theorem]{Remark}
    \newcommand{\FF}{\mathbb{F}}
    \newcommand{\QQ}{\mathbb{Q}}
    \newcommand{\ZZ}{\mathbb{Z}}
    \newcommand{\RR}{\mathbb{R}}
    \newcommand{\CC}{\mathbb{C}}
    \newcommand{\PP}{\mathbb{P}}
    \newcommand{\NN}{\mathbb{N}}
    \newcommand{\jac}{\mathcal{J}_{C}}
    \newcommand{\heltal}[1]{\mathfrak{O}_{#1}}
    \newcommand{\frob}{\varphi}
    \newcommand{\fullED}{\varkappa}
    \DeclareMathOperator{\End}{End}
    \DeclareMathOperator{\Div}{Div}
    \DeclareMathOperator{\Mat}{Mat}
    \DeclareMathOperator{\gal}{Gal}
    \DeclareMathOperator{\divisor}{div}
\begin{document}

\title[Non-Cyclic Subgroups of Jacobians of Genus Two Curves with CM]
{Non-Cyclic Subgroups of Jacobians of Genus Two Curves with Complex Multiplication}

\author[C.R. Ravnshøj]{Christian Robenhagen Ravnshøj}

\address{Department of Mathematical Sciences \\
University of Aarhus \\
Ny Munkegade \\
Building 1530 \\
DK-8000 Aarhus C}

\email{cr@imf.au.dk}

\thanks{Research supported in part by a PhD grant from CRYPTOMAThIC}

\keywords{Jacobians, hyperelliptic curves, embedding degree, complex multiplication, cryptography}

\subjclass[2000]{Primary 14H40; Secondary 11G15, 14Q05, 94A60}


\begin{abstract}
Let~$E$ be an elliptic curve defined over a finite field. Balasubramanian and Koblitz have proved that if the
$\ell^\text{th}$ roots of unity $\mu_\ell$ is not contained in the ground field, then a field extension of the ground
field contains $\mu_\ell$ if and only if the $\ell$-torsion points of~$E$ are rational over the same field extension.
We generalize this result to Jacobians of genus two curves with complex multiplication. In particular, we show that the
Weil- and the Tate-pairing on such a Jacobian are non-degenerate over the same field extension of the ground field.
\end{abstract}

\maketitle

\section{Introduction}

In \cite{koblitz87}, Koblitz described how to use elliptic curves to construct a public key cryptosystem. To get a more
general class of curves, and possibly larger group orders, Koblitz \cite{koblitz89} then proposed using Jacobians of
hyperelliptic curves.

In elliptic curve cryptography it is essential to know the number of points on the curve. Cryptographically we are
interested in elliptic curves with large cyclic subgroups. Such elliptic curves can be constructed. The construction is
based on the theory of complex multiplication, studied in detail by \cite{atkin-morain}. It is referred to as the
\emph{CM method}. The CM method for constructing elliptic curves has been generalized to genus~two curves by
\cite{spallek}, and efficient algorithms have been proposed by \cite{weng03} and \cite{gaudry}. Both algorithms take as
input a primitive, quartic CM field $K$ (see section~\ref{sec:CMfields}), and give as output a genus~two curve $C$
defined over a prime field $\FF_p$.

After Boneh and Franklin \cite{boneh-franklin} proposed an identity based cryptosystem by using the Weil pairing on an
elliptic curve, pairings have been of great interest to cryptography~\cite{galbraith05}. The next natural step was to
consider pairings on Jacobians of hyperelliptic curves. Galbraith \emph{et al}~\cite{galbraith07} survey the recent
research on pairings on Jacobians of hyperelliptic curves.

The pairing in question is usually the Weil- or the Tate-pairing; both pairings can be computed with Miller's algorithm
\cite{miller-algorithm}. The Tate-pairing can be computed more efficiently than the Weil-pairing, cf.
\cite{galbraith01}. Let~$C$ be a smooth curve defined over a finite field $\FF_q$, and let~$\jac$ be the Jacobian
of~$C$. Let~$\ell$ be a prime number dividing the number of $\FF_q$-rational points on the Jacobian, and let $k$ be the
multiplicative order of $q$ modulo~$\ell$. By \cite{hess}, the Tate-pairing is non-degenerate
on~$\jac(\FF_{q^k})[\ell]$. By \cite[Proposition~8.1, p.~96]{sil}, the Weil-pairing is non-degenerate on~$\jac[\ell]$.
So if~$\jac[\ell]$ is not contained in~$\jac(\FF_{q^k})$, then the Tate pairing is non-degenerate over a possible
smaller field extension than the Weil-pairing. For elliptic curves, in most cases relevant to cryptography, the
Weil-pairing and the Tate-pairing are non-degenerate over the same field: let~$E$ be an elliptic curve defined
over~$\FF_p$, and consider a prime number~$\ell$ dividing the number of $\FF_p$-rational points on~$E$. Balasubramanian
and Koblitz \cite{balasubramanian} proved that
    \begin{equation}\label{eq:embeddingEllipticCurves}
    \text{\emph{if $\ell\nmid p-1$, then $E[\ell]\subseteq E(\FF_{p^k})$ if and only if $\ell\mid p^k-1$.}}
    \end{equation}
By Rubin and Silverberg \cite{rubin-silverberg07}, this result also holds for Jacobians of genus two curves in the
following sense: \emph{if $\ell\nmid p-1$, then the Weil-pairing is non-degenerate on $U\times V$, where
$U=\jac(\FF_p)[\ell]$, $V=\ker(\frob-p)\cap\jac[\ell]$ and $\frob$ is the $p$-power Frobenius endomorphism on~$\jac$}.

The result~\eqref{eq:embeddingEllipticCurves} can also be stated as: \emph{if $\ell\nmid p-1$, then
$E(\FF_{p^k})[\ell]$ is bicyclic if and only if $\ell\mid p^k-1$}. In this paper, we show that in most cases, this
result also holds for Jacobians of genus two curves with complex multiplication. More precisely, the following theorem
is established.

\setcounter{theorem}{8}

\begin{theorem}
Consider a genus two curve $C$ defined over $\FF_p$ with $\End(\jac)\simeq\heltal{K}$, where $K$ is a primitive,
quartic CM field (cf.~section~\ref{sec:CMfields}). Let $\omega_m$ be a $p^m$-Weil number of the Jacobian~$\jac$. Let
$\ell$ be an odd prime number dividing the number of $\FF_p$-rational points on $\jac$, and with~$\ell$ unramified
in~$K$, $\ell\nmid p$ and $\ell\nmid p-1$. Let $p$ be of multiplicative order $k$ modulo~$\ell$. Then the following
holds.
    \begin{enumerate}[(i)]
    \item If $\omega_m^2\not\equiv 1\pmod{\ell}$, then $\jac(\FF_{p^m})[\ell]$ is bicyclic if and only if~$\ell$ divides
    $p^m-1$.
    \item The Weil-pairing is non-degenerate on $\jac(\FF_{p^k})[\ell]\times\jac(\FF_{p^k})[\ell]$.
    \end{enumerate}
\end{theorem}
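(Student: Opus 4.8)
The plan is to analyze the structure of the $\ell$-torsion as a module over the endomorphism ring, using the Frobenius and its action on $\jac[\ell]$.

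The plan is to describe $\jac[\ell]$ explicitly as a module over $\heltal{K}$ and to read off both the rational $\ell$-torsion and the Weil pairing from the $\frob$-eigenvalues modulo $\ell$. First I would use that $\ell$ is unramified in $K$ to write $\heltal{K}/\ell\heltal{K}\cong\prod_{\mathfrak{l}\mid\ell}\FF_{\ell^{f_\mathfrak{l}}}$, a product of finite fields indexed by the primes $\mathfrak{l}$ of $\heltal{K}$ above $\ell$. Since $\End(\jac)\simeq\heltal{K}$ and $\jac[\ell]$ has $\FF_\ell$-dimension $4=[K:\QQ]$, the module $\jac[\ell]$ is free of rank one over $\heltal{K}/\ell\heltal{K}$, so it splits as $\jac[\ell]\cong\bigoplus_{\mathfrak{l}\mid\ell}V_\mathfrak{l}$ with $V_\mathfrak{l}\cong\FF_{\ell^{f_\mathfrak{l}}}$, and $\frob$ acts on $V_\mathfrak{l}$ as multiplication by the scalar $\pi_\mathfrak{l}:=\omega_1\bmod\mathfrak{l}$, where $\omega_1$ is the $p$-Weil number realising $\frob$ in $\heltal{K}$. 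Because each $V_\mathfrak{l}$ is a field, $\frob^m-1$ is either zero or invertible on it, whence
\begin{equation*}
\jac(\FF_{p^m})[\ell]=\ker(\frob^m-1)=\bigoplus_{\mathfrak{l}:\ \pi_\mathfrak{l}^m=1}V_\mathfrak{l},\qquad \dim_{\FF_\ell}\jac(\FF_{p^m})[\ell]=\sum_{\mathfrak{l}:\ \pi_\mathfrak{l}^m=1}f_\mathfrak{l}.
\end{equation*}
Everything then reduces to understanding which $\pi_\mathfrak{l}$ are $m$-th roots of unity.

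Next I would locate the eigenvalues $1$ and $p$. Since $\ell$ divides $\#\jac(\FF_p)=N_{K/\QQ}(\omega_1-1)$, some prime $\mathfrak{l}_0\mid\ell$ divides $\omega_1-1$, i.e.\ $\pi_{\mathfrak{l}_0}=1$. The relation $\omega_1\bar\omega_1=p$ (complex conjugation being the Rosati involution) then gives $\pi_{\bar{\mathfrak{l}}_0}=p$ at the conjugate prime; equivalently the compatibility $e(\frob P,\frob Q)=e(P,Q)^p$ of the Weil pairing forces the $\frob$-eigenvalues to occur in pairs $\{\lambda,p/\lambda\}$, so the presence of $1$ forces the presence of $p$. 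As $\ell\nmid p-1$ we have $1\neq p$ in $\FF_\ell$, so $\mathfrak{l}_0\neq\bar{\mathfrak{l}}_0$ and $V_{\mathfrak{l}_0},V_{\bar{\mathfrak{l}}_0}$ are distinct summands.

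For (ii) I take $m=k$, so $p^k\equiv1\pmod\ell$. Writing $W=\jac(\FF_{p^k})[\ell]=\bigoplus_{\pi_\mathfrak{l}^k=1}V_\mathfrak{l}$, the indexing set is stable under $\mathfrak{l}\mapsto\bar{\mathfrak{l}}$: if $\pi_\mathfrak{l}^k=1$ then $\pi_{\bar{\mathfrak{l}}}^k=(p/\pi_\mathfrak{l})^k=p^k=1$ as well. Since the Weil pairing is non-degenerate on $\jac[\ell]$ and $e(\frob P,\frob Q)=e(P,Q)^p$ shows $V_\mathfrak{l}\perp V_{\mathfrak{l}'}$ unless $\pi_\mathfrak{l}\pi_{\mathfrak{l}'}=p$, i.e.\ unless $\mathfrak{l}'=\bar{\mathfrak{l}}$, the pairing restricts to a perfect pairing $V_\mathfrak{l}\times V_{\bar{\mathfrak{l}}}\to\mu_\ell$ (and to a non-degenerate form on $V_\mathfrak{l}$ when $\mathfrak{l}=\bar{\mathfrak{l}}$). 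As $W$ is a direct sum of such conjugation-closed blocks, its pairing radical is trivial and $e$ is non-degenerate on $W\times W$. Notably this argument needs no control on $\dim_{\FF_\ell}W$, so (ii) does not require the hypothesis in (i).

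Part (i) then amounts to pinning the dimension to exactly $2$. By the above, $V_{\bar{\mathfrak{l}}_0}$ (eigenvalue $p$) enters $\jac(\FF_{p^m})[\ell]$ precisely when $p^m\equiv1$, that is when $\ell\mid p^m-1$, while $V_{\mathfrak{l}_0}$ (eigenvalue $1$) is always present. I expect the main obstacle to be ruling out every other contribution, and this is exactly where $\omega_m^2\not\equiv1\pmod\ell$ and the primitivity of $K$ enter. Reading the hypothesis as $\frob^{2m}\neq\mathrm{id}$ on $\jac[\ell]$, I would argue by a case analysis on the splitting type of $\ell$ in $K$: first that $f_{\mathfrak{l}_0}=f_{\bar{\mathfrak{l}}_0}=1$, and second that no further summand $V_\mathfrak{l}$ can satisfy $\pi_\mathfrak{l}^m=1$ without forcing $\omega_m^2\equiv1$. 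The delicate point is the fully split case, where the two remaining eigenvalues $a,p/a$ lie in $\FF_\ell$: if exactly one of them were an $m$-th root of unity one would obtain a bicyclic group with $\ell\nmid p^m-1$, and excluding this is where I would use that $K$ is a primitive quartic CM field to constrain the Galois action on the reductions $\pi_\mathfrak{l}$, together with the relation $\pi_\mathfrak{l}\pi_{\bar{\mathfrak{l}}}=p$. Granting this, $\dim_{\FF_\ell}\jac(\FF_{p^m})[\ell]=2$ if and only if $\ell\mid p^m-1$, which is the asserted criterion; this case analysis, rather than any single computation, is the crux of the proof.
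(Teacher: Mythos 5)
Your decomposition of $\jac[\ell]$ as a free rank-one module over $\heltal{K}/\ell\heltal{K}\cong\prod_{\mathfrak{l}\mid\ell}\FF_{\ell^{f_\mathfrak{l}}}$ is a legitimate framework and genuinely different from the paper's, which never introduces this module structure but works throughout with the characteristic polynomial $\bar{P}_m$ of Frobenius on $\jac[\ell]$, the pairing of its roots as $(\alpha,p^m/\alpha)$, and the (un)ramifiedness of $\ell$ in $K$. Your argument for (ii) is essentially complete on this route, modulo two standard facts you assert rather than prove: that the $\ell$-torsion is free of rank one over $\heltal{K}/\ell\heltal{K}$ when $\End(\jac)\simeq\heltal{K}$, and that the Rosati involution restricts to complex conjugation on $K$ --- the latter is what actually justifies $V_\mathfrak{l}\perp V_{\mathfrak{l}'}$ for $\mathfrak{l}'\neq\bar{\mathfrak{l}}$ when a residue degree exceeds one, since then $\pi_\mathfrak{l}$ is not a rational scalar and the identity $e(\frob P,\frob Q)=e(P,Q)^p$ cannot be exponentiated literally. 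Your proof of (ii) is also more self-contained than the paper's, which first pins $\jac(\FF_{p^k})[\ell]$ down to $U\oplus V\simeq\ZZ/\ell\ZZ\times\ZZ/\ell\ZZ$ with $U=\jac(\FF_p)[\ell]$ and $V=\ker(\frob-p)\cap\jac[\ell]$ using its rank bound (Lemma 7), quotes Rubin and Silverberg for non-degeneracy on $U\times V$, and finishes by an explicit computation.

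Part (i), however, contains a genuine gap: at the decisive step you write ``Granting this'' and supply no argument. What must be excluded is precisely the configuration you name --- for instance $\ell$ totally split with Frobenius eigenvalues $1,p,a,p/a$ in $\FF_\ell$ and $a^m=1$ while $p^m\neq1$, which would make $\jac(\FF_{p^m})[\ell]$ bicyclic with $\ell\nmid p^m-1$ --- together with the possibility that $f_{\mathfrak{l}_0}>1$ (an eigenvalue-$1$ block of $\FF_\ell$-dimension two), which would break the converse implication. These are not side cases: they are where every remaining hypothesis ($K$ primitive, $\ell$ unramified, $\omega_m^2\not\equiv1\pmod\ell$) is consumed. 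The paper's mechanism is as follows. Bicyclicity with $p^m\not\equiv1\pmod\ell$ forces $\bar{P}_m(X)=(X-1)^2(X-p^m)^2$; Lemma 8 uses the primitivity of $K$ to show that $\omega_m^2\not\equiv1\pmod\ell$ makes $P_m$ irreducible over $\QQ$; and the squared factorization of an irreducible quartic modulo $\ell$ is then played off against the unramifiedness of $\ell$ via Dedekind's factorization theorem (Neukirch, Prop.\ 8.3) to reach a contradiction. The upper bound rank $\le2$ needed for the converse (Lemma 7) comes from the same source: $\bar{P}_m=(X-1)^4$ gives $(\omega_m-1)^4\in\ell\heltal{K}$, hence $\omega_m\equiv1\pmod\ell$ because $\heltal{K}/\ell\heltal{K}$ is reduced (Lemma 6). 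In your language the needed input is that the multiset of reductions $\pi_\mathfrak{l}$, counted with multiplicities $f_\mathfrak{l}$, is the root multiset of $\bar{P}_1$, combined with the irreducibility of $P_m$ and the ramification argument; without carrying this out, your part (i) remains a plan rather than a proof.
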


\setcounter{theorem}{0}

\subsection*{Notation and assumptions}

In this paper we only consider smooth curves. If $F$ is an algebraic number field, then $\heltal{F}$ denotes the ring
of integers of $F$, and $F_0=F\cap\RR$ denotes the real subfield of $F$.

\section{Genus two curves}\label{sec:HyperellipticCurves}

A hyperelliptic curve is a projective curve $C\subseteq\PP^n$ of genus at least two with a separable, degree two
morphism $\phi:C\to\PP^1$. It is well known, that any genus two curve is hyperelliptic. Throughout this paper, let~$C$
be a curve of genus two defined over a finite field~$\FF_q$ of characteristic~$p$. By the Riemann-Roch Theorem there
exists a birational map \mbox{$\psi:C\to\PP^2$}, mapping~$C$ to a curve given by an equation of the form
    $$y^2+g(x)y=h(x),$$
where $g,h\in\FF_q[x]$ are of degree $\deg(g)\leq 3$ and $\deg(h)\leq 6$; cf.~\cite[chapter~1]{cassels}.

The set of principal divisors $\mathcal{P}(C)$ on~$C$ constitutes a subgroup of the degree zero divisors $\Div_0(C)$.
The Jacobian~$\jac$ of~$C$ is defined as the quotient
    $$\jac=\Div_0(C)/\mathcal{P}(C).$$
{\samepage Let $\ell\neq p$ be a prime number. The $\ell^n$-torsion subgroup~$\jac[\ell^n]\subseteq\jac$ of points of
order dividing $\ell^n$ is a $\ZZ/\ell^n\ZZ$-module of rank four, i.e.
    \begin{equation*}\label{eq:J-struktur}
    \jac[\ell^n]\simeq\ZZ/\ell^n\ZZ\times\ZZ/\ell^n\ZZ\times\ZZ/\ell^n\ZZ\times\ZZ/\ell^n\ZZ;
    \end{equation*}
cf.~\cite[Theorem~6, p.~109]{lang59}.}

The multiplicative order $k$ of $q$ modulo~$\ell$ plays an important role in cryptography, since the (reduced)
Tate-pairing is non-degenerate over $\FF_{q^k}$; cf.~\cite{hess}.

\begin{definition}[Embedding degree]
Consider a prime number $\ell\neq p$ dividing the number of $\FF_q$-rational points on the Jacobian~$\jac$. The
embedding degree of~$\jac(\FF_q)$ with respect to~$\ell$ is the least number $k$, such that $q^k\equiv 1\pmod{\ell}$.
\end{definition}

Closely related to the embedding degree, we have the \emph{full} embedding degree.

\begin{definition}[Full embedding degree]
Consider a prime number $\ell\neq p$ dividing the number of $\FF_q$-rational points on the Jacobian~$\jac$. The full
embedding degree of~$\jac(\FF_q)$ with respect to~$\ell$ is the least number $\fullED$, such that
$\jac[\ell]\subseteq\jac(\FF_{q^\fullED})$.
\end{definition}

\begin{remark}
If~$\jac[\ell]\subseteq\jac(\FF_{q^\fullED})$, then $\ell\mid q^\fullED-1$; cf.~\cite[Corollary~5.77, p.~111]{hhec}.
Hence, the full embedding degree is a multiple of the embedding degree.
\end{remark}

A priori, the Weil-pairing is only non-degenerate over $\FF_{q^\fullED}$. But in fact, as we shall see, the
Weil-pairing is also non-degenerate over $\FF_{q^k}$.

\section{The Weil- and the Tate-pairing}

Let $\FF$ be an algebraic extension of~$\FF_q$. Let $x\in\jac(\FF)[\ell]$ and $y=\sum_ia_i P_i\in\jac(\FF)$ be divisors
with disjoint supports, and let $\bar{y}\in\jac(\FF)/\ell\jac(\FF)$ denote the divisor class containing the
divisor~$y$. Furthermore, let $f_x\in\FF(C)$ be a rational function on~$C$ with divisor $\divisor(f_x)=\ell x$. Set
$f_x(y)=\prod_if(P_i)^{a_i}$. Then $e_\ell(x,\bar{y})=f_x(y)$ is a well-defined pairing
    $$e_\ell:\jac(\FF)[\ell]\times\jac(\FF)/\ell\jac(\FF)\longrightarrow\FF^\times/(\FF^\times)^\ell,$$
it is called the \emph{Tate-pairing}; cf.~\cite{galbraith05}. Raising the result to the
power~$\frac{|\FF^\times|}{\ell}$ gives a well-defined element in the subgroup $\mu_\ell\subseteq\bar\FF$ of the
$\ell^{\mathrm{th}}$ roots of unity. This pairing
    $$\hat{e}_\ell:\jac(\FF)[\ell]\times\jac(\FF)/\ell\jac(\FF)\longrightarrow\mu_\ell$$
is called the \emph{reduced} Tate-pairing. If the field~$\FF$ is finite and contains the $\ell^\mathrm{th}$ roots of
unity, then the Tate-pairing is bilinear and non-degenerate; cf.~\cite{hess}.

Now let $x,y\in\jac[\ell]$ be divisors with disjoint support. The Weil-pairing
    $$e_\ell:\jac[\ell]\times\jac[\ell]\to\mu_\ell$$
is then defined by $e_\ell(x,y)=\frac{\hat{e}_\ell(x,\bar{y})}{\hat{e}_\ell(y,\bar{x})}$. The Weil-pairing is bilinear,
anti-symmetric and non-degenerate on~$\jac[\ell]\times\jac[\ell]$; cf.~\cite{miller}.

\section{Matrix representation of the endomorphism ring}

An endomorphism $\psi:\jac\to\jac$ induces a linear map $\bar{\psi}:\jac[\ell]\to\jac[\ell]$ by restriction. Hence,
$\psi$ is represented by a matrix $M\in\Mat_4(\ZZ/\ell\ZZ)$ on~$\jac[\ell]$. Let $f\in\ZZ[X]$ be the characteristic
polynomial of $\psi$ (see~\cite[pp.~109--110]{lang59}), and let $\bar{f}\in(\ZZ/\ell\ZZ)[X]$ be the characteristic
polynomial of $\bar{\psi}$. Then $f$ is a monic polynomial of degree four, and by \cite[Theorem~3, p.~186]{lang59},
    \begin{equation*}\label{eq:KarPolKongruens}
    f(X)\equiv\bar{f}(X)\pmod{\ell}.
    \end{equation*}

Since~$C$ is defined over~$\FF_q$, the mapping $(x,y)\mapsto (x^q,y^q)$ is a morphism on~$C$. This morphism induces the
$q$-power Frobenius endo\-morphism $\frob$ on the Jacobian~$\jac$. Let $P(X)$ be the characteristic polynomial
of~$\frob$. $P(X)$ is called the \emph{Weil polynomial} of~$\jac$, and
    $$|\jac(\FF_q)|=P(1)$$
by the definition of $P(X)$ (see~\cite[pp.~109--110]{lang59}); i.e. the number of~$\FF_q$-rational points on the
Jacobian is $P(1)$.

\begin{definition}[Weil number]\label{definition:WeilNumber}
Let notation be as above. Let $P_m(X)$ be the characteristic polynomial of the $q^m$-power Frobenius
endomorphism~$\frob_m$ on~$\jac$. Consider a number $\omega_m\in\CC$ with $P_m(\omega_m)=0$. If $P_m(X)$ is reducible,
assume furthermore that $\omega_m$ and $\frob_m$ are roots of the same irreducible factor of $P_m(X)$. We
identify~$\frob_m$ with~$\omega_m$, and we call~$\omega_m$ a \emph{$q^m$-Weil~number} of~$\jac$.
\end{definition}

\begin{remark}\label{rem:P_reducible}
A $q^m$-Weil~number is not necessarily uniquely determined. In general, $P_m(X)$ is irreducible, in which case~$\jac$
has four $q^m$-Weil~numbers.

Assume $P_m(X)$ is reducible. Write $P_m(X)=f(X)g(X)$, where $f,g\in\ZZ[X]$ are of degree at least one. Since
$P_m(\frob_m)=0$, either $f(\frob_m)=0$ or $g(\frob_m)=0$; if not, then either $f(\frob_m)$ or $g(\frob_m)$ has
infinite kernel, i.e. is not an endomorphism of~$\jac$. So a $q^m$-Weil number is well-defined.
\end{remark}

\section{CM fields}\label{sec:CMfields}

An elliptic curve $E$ with $\ZZ\neq\End(E)$ is said to have \emph{complex multiplication}. Let $K$ be an ima\-ginary,
quadratic number field with ring of integers $\heltal{K}$. $K$ is a \emph{CM field}, and if
\mbox{$\End(E)\simeq\heltal{K}$}, then $E$ is said to have \emph{CM by $\heltal{K}$}. More generally a CM field is
defined as follows.

\begin{definition}[CM field]
A number field $K$ is a CM field, if $K$ is a totally imaginary, quadratic extension of a totally real number field
$K_0$.
\end{definition}

In this paper only CM fields of degree $[K:\QQ]=4$ are considered. Such a field is called a \emph{quartic} CM field.


Let $C$ be a genus two curve. We say that $C$ has CM by~$\heltal{K}$, if $\End(\jac)\simeq\heltal{K}$. The structure of
$K$ determines whether $\jac$ is simple, i.e. does not contains an abelian subvariety other than $\{\mathcal{O}\}$ and
itself. More precisely, the following theorem holds.

\begin{theorem}\label{teo:simple}
Let $C$ be a genus two curve with $\End(\jac)\simeq\heltal{K}$, where $K$ is a quartic CM field. Then $\jac$ is simple
if and only if $K/\QQ$ is Galois with Galois group $\gal(K/\QQ)\simeq\ZZ/2\ZZ\times\ZZ/2\ZZ$.
\end{theorem}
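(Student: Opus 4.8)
The plan is to use the Shimura--Taniyama theory of abelian varieties with complex multiplication. Since $\End(\jac)\simeq\heltal{K}$ and $[K:\QQ]=4=2\dim\jac$, the surface $\jac$ is of CM type $(K,\Phi)$ for a suitable CM type $\Phi$, and the governing principle is the standard criterion that $\jac$ is simple if and only if $(K,\Phi)$ is \emph{primitive}, that is, $\Phi$ is not induced from a CM type on a proper CM subfield of $K$. Because $[K:\QQ]=4$, the only possible proper CM subfields are imaginary quadratic, so the whole question is controlled by the subfield structure of $K$ together with the position of complex conjugation inside the relevant Galois group.

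First I would organise quartic CM fields by the isomorphism type of $\gal(K/\QQ)$, respectively of the Galois closure: the cyclic case $\ZZ/4\ZZ$, the biquadratic case $\gal(K/\QQ)\simeq\ZZ/2\ZZ\times\ZZ/2\ZZ$, and the non-Galois case, whose closure is the dihedral group of order eight. In each case I would determine the quadratic subfields of $K$, using that the maximal totally real subfield $K_0$ is the fixed field of complex conjugation, and record how $\Phi$ behaves under restriction to those subfields.

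Next I would realise the four embeddings $K\hookrightarrow\CC$ as (cosets of) elements of the Galois group, pair them under complex conjugation, and test the induction criterion case by case. The heart of the argument is to show that primitivity of $(K,\Phi)$ is equivalent to $\gal(K/\QQ)\simeq\ZZ/2\ZZ\times\ZZ/2\ZZ$; combined with the simplicity criterion this yields the stated equivalence for $\jac$. I would finish by noting that the principal polarisation carried by a genus-two Jacobian introduces no further cases, so the dichotomy for $\jac$ coincides with the one for the abstract CM pair $(K,\Phi)$.

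The main obstacle is the coset bookkeeping that reads primitivity off the Galois group together with the location of complex conjugation, carried out uniformly over all four CM types. I expect the non-Galois case to demand the most care, since there one must argue inside the order-eight closure rather than in $K$ itself, where the relevant induction cannot be detected directly from $\gal(K/\QQ)$.
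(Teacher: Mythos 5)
Your overall strategy --- reducing simplicity of $\jac$ to primitivity of the CM pair $(K,\Phi)$ via Shimura--Taniyama theory and then classifying quartic CM fields by the Galois group of the Galois closure --- is exactly the machinery behind this result; the paper itself gives no argument at all, only the citation to Shimura, so fleshing it out is the right instinct. But your pivotal claim is inverted, and the coset bookkeeping you propose, carried out correctly, refutes it. You announce that the heart of the argument is to show primitivity of $(K,\Phi)$ is \emph{equivalent} to $\gal(K/\QQ)\simeq\ZZ/2\ZZ\times\ZZ/2\ZZ$. In fact the biquadratic case is precisely the \emph{imprimitive} one: write $\gal(K/\QQ)=\{1,\rho,\sigma,\rho\sigma\}$ with $\rho$ complex conjugation (the involution fixing $K_0$). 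A CM type picks one embedding from each pair $\{1,\rho\}$, $\{\sigma,\rho\sigma\}$, say $\Phi=\{1,\sigma\}$; since $\sigma^2=1$ we get $\sigma\Phi=\{\sigma,1\}=\Phi$, so $\Phi$ has nontrivial stabilizer $\langle\sigma\rangle$ and is induced from the fixed field of $\sigma$, which is an imaginary quadratic subfield. Hence $\jac$ is isogenous to a product of elliptic curves and is \emph{not} simple when $K$ is biquadratic. Conversely, in the cyclic case the stabilizer is trivial ($\sigma\{1,\sigma\}=\{\sigma,\sigma^2\}$ with $\sigma^2=\rho\neq 1$), and the non-Galois case --- which you flag as the hardest --- is actually immediate and needs no work inside the order-eight closure: a non-Galois quartic CM field has a unique quadratic subfield, namely the real field $K_0$, so there is no imaginary quadratic subfield from which a CM type could be induced, and every CM type is primitive. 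The true dichotomy is therefore: $\jac$ is simple if and only if $K/\QQ$ is non-Galois or cyclic Galois.

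This inversion is not a pedantic quibble about the printed statement going one way or the other: the paper itself uses the theorem in the corrected form. Definition~\ref{def:CMfieldPrimitive} declares $K$ primitive exactly when $K/\QQ$ is non-Galois or cyclic, the theorem is said to motivate that definition, and the proof of Lemma~\ref{lem:Pirreducible} invokes ``$\jac$ is simple by Theorem~\ref{teo:simple}'' precisely under the hypothesis that $K$ is primitive --- which would be a contradiction under the statement as printed. So your plan, faithfully executed, proves the corrected assertion ($\jac$ simple $\iff$ $\gal(K/\QQ)\not\simeq\ZZ/2\ZZ\times\ZZ/2\ZZ$) and cannot prove the literal one; as written, your proposal commits to a false equivalence at its core, and the case analysis would collapse at the biquadratic case. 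One further point to make explicit if you write this up: the criterion ``simple iff $(K,\Phi)$ primitive'' presupposes only an embedding $\heltal{K}\hookrightarrow\End(\jac)$; in the imprimitive case $\End^0(\jac)$ strictly contains $K$, so the hypothesis should be read as $\jac$ having CM by $\heltal{K}$ rather than as a literal ring isomorphism, since a field-valued endomorphism algebra would by itself already force simplicity.
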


\begin{proof}
\cite[proposition~26, p.~61]{shi}.
\end{proof}

Theorem~\ref{teo:simple} motivates the following definition.

\begin{definition}[Primitive, quartic CM field]\label{def:CMfieldPrimitive}
A quartic CM field $K$ is called primitive if either $K/\QQ$ is not Galois, or $K/\QQ$ is Galois with cyclic Galois
group.
\end{definition}

\section{Non-cyclic subgroups of $\jac$}\label{sec:properties}

Let $K$ be a primitive, quartic CM field. By the CM method (see \cite{weng03,gaudry}), we can construct a genus two
curve $C$ with $\End(\jac)\simeq\heltal{K}$. The following theorem concerns such a curve.

{\samepage
\begin{theorem}\label{teo:main}
Consider a genus two curve $C$ defined over $\FF_p$ with $\End(\jac)\simeq\heltal{K}$, where $K$ is a primitive,
quartic CM field. Let $\omega_m$ be a $p^m$-Weil number of the Jacobian~$\jac$. Let~$\ell$ be an odd prime number
dividing the number of $\FF_p$-rational points on $\jac$, and with~$\ell$ unramified in~$K$, $\ell\nmid p$ and
$\ell\nmid p-1$. Let $p$ be of multiplicative order $k$ modulo~$\ell$. Then the following holds.
    \begin{enumerate}[(i)]
    \item If $\omega_m^2\not\equiv 1\pmod{\ell}$, then $\jac(\FF_{p^m})[\ell]$ is bicyclic if and only if~$\ell$ divides
    $p^m-1$.\label{teo:main:1}
    \item The Weil-pairing is non-degenerate on $\jac(\FF_{p^k})[\ell]\times\jac(\FF_{p^k})[\ell]$.\label{teo:main:2}
    \end{enumerate}
\end{theorem}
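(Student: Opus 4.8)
The plan is to reduce both statements to a computation with the eigenvalues of $\frob$ acting on the four-dimensional $\FF_\ell$-vector space $\jac[\ell]$. A point lies in $\jac(\FF_{p^m})$ exactly when it is fixed by the $p^m$-power Frobenius $\frob^m$, so $\jac(\FF_{p^m})[\ell]=\ker(\frob^m-1)$ inside $\jac[\ell]\simeq(\ZZ/\ell\ZZ)^4$; since this group is elementary abelian, it is bicyclic precisely when $\dim_{\FF_\ell}\ker(\frob^m-1)=2$. By the matrix-representation section the characteristic polynomial of $\frob$ on $\jac[\ell]$ is the reduction $P\pmod\ell$ of the Weil polynomial. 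Because $\End(\jac)\simeq\heltal{K}$ with $K$ a field, $\jac$ is simple and $K=\QQ(\frob)$, so $P$ is irreducible and $\jac[\ell]$ is a module over $\heltal{K}/\ell\heltal{K}$ on which $\frob$ acts as multiplication by a Weil number. As $\ell$ is unramified in $K$, the ring $\heltal{K}/\ell\heltal{K}$ is a product of finite fields; hence $\frob$ acts semisimply, and its eigenvalues over $\overline{\FF}_\ell$ are exactly the reductions of the four Weil numbers. Consequently $\dim_{\FF_\ell}\ker(\frob^m-1)$ equals the number of these eigenvalues (counted with multiplicity) whose $m$-th power is~$1$.

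Next I fix the eigenvalues. The Frobenius-compatibility $e_\ell(\frob x,\frob y)=e_\ell(x,y)^p$ of the Weil pairing shows, after extending scalars to $\overline{\FF}_\ell$, that the eigenspaces $V_\alpha$ and $V_\beta$ are $e_\ell$-orthogonal unless $\alpha\beta\equiv p\pmod\ell$; in particular the eigenvalues occur in dual pairs $\{\alpha,p/\alpha\}$, recovering the functional equation of $P$. Since $\ell\mid P(1)=|\jac(\FF_p)|$, the value $1$ is an eigenvalue, so its partner $p$ is an eigenvalue as well, and the hypothesis $\ell\nmid p-1$ ensures $1\not\equiv p$. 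Writing the remaining dual pair as $\{\mu,p/\mu\}$, the four reduced eigenvalues are $1,\,p,\,\mu,\,p/\mu$, and the associated $p^m$-Weil numbers are $1,\,p^m,\,\mu^m,\,p^m\mu^{-m}$.

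For part~(\ref{teo:main:1}) I then count how many of $1,p,\mu,p/\mu$ have $m$-th power congruent to~$1$. The eigenvalue $1$ always contributes, and $p$ contributes exactly when $\ell\mid p^m-1$; identifying $\omega_m$ with a nontrivial Weil number $\mu^m$, the hypothesis $\omega_m^2\not\equiv1$ reads $\mu^{2m}\not\equiv1$, which forces $\mu^m\not\equiv\pm1$ and so prevents the pair $\{\mu,p/\mu\}$ from contributing through $\mu^m\equiv1$. Thus if $\ell\mid p^m-1$ the count is exactly $2$, while conversely a count of $2$ forces $p$ to contribute, i.e. $\ell\mid p^m-1$. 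Carrying this out rigorously is the main obstacle: one must control the multiplicities of the eigenvalues $1$ and $p$ and rule out the coincidences $\mu^m\equiv1$ and $\mu^m\equiv p^m$, and it is precisely here that $\omega_m^2\not\equiv1$, together with the primitivity of $K$, is indispensable.

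Part~(\ref{teo:main:2}) follows more cleanly and needs no genericity hypothesis. Taking $m=k$ we have $p^k\equiv1\pmod\ell$, so the set of eigenvalues $\alpha$ with $\alpha^k=1$ is stable under $\alpha\mapsto p/\alpha$. Hence $\jac(\FF_{p^k})[\ell]=\ker(\frob^k-1)$ decomposes over $\overline{\FF}_\ell$ as a direct sum of dual blocks $V_\alpha\oplus V_{p/\alpha}$, on each of which the non-degeneracy of $e_\ell$ on $\jac[\ell]$ restricts to a perfect pairing between $V_\alpha$ and $V_{p/\alpha}$. Therefore $e_\ell$ is non-degenerate on $\jac(\FF_{p^k})[\ell]$, and non-degeneracy descends from $\overline{\FF}_\ell$ to $\FF_\ell$.
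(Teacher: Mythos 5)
Your part~(ii) argument is correct and is genuinely different from the paper's: you decompose $\jac(\FF_{p^k})[\ell]\otimes\overline{\FF}_\ell$ into Frobenius eigenspaces (semisimple because $\heltal{K}/\ell\heltal{K}$ is \'etale when $\ell$ is unramified), observe that $V_\alpha$ pairs perfectly with $V_{p/\alpha}$, and note that $\{\alpha:\alpha^k=1\}$ is stable under $\alpha\mapsto p/\alpha$ since $p^k\equiv1$. That is a clean self-contained proof; the paper instead splits $\jac(\FF_{p^k})[\ell]$ as $U\oplus V$ with $U=\jac(\FF_p)[\ell]$, $V=\ker(\frob-p)\cap\jac[\ell]$, imports non-degeneracy on $U\times V$ from Rubin--Silverberg, and finishes with an explicit computation. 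Your route buys independence from that external result, at the cost of needing the semisimplicity input (which you do correctly attribute to unramifiedness).

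Part~(i), however, has a genuine gap, and you have flagged it yourself without closing it: ``carrying this out rigorously is the main obstacle'' is precisely the step the theorem lives or dies on. Concretely, for the forward direction you must exclude the configuration in which the eigenvalues of $\frob$ on $\jac[\ell]$ are $1,p,\mu,p/\mu$ with $\mu^m\equiv1$ or $\mu^m\equiv p^m$ while $p^m\not\equiv1$, i.e.\ $\bar{P}_m(X)=(X-1)^2(X-p^m)^2$. Your proposed exit --- ``identifying $\omega_m$ with a nontrivial Weil number $\mu^m$, the hypothesis $\omega_m^2\not\equiv1$ reads $\mu^{2m}\not\equiv1$'' --- misreads the hypothesis: $\omega_m$ is identified with the endomorphism $\frob_m$, so $\omega_m^2\not\equiv1\pmod{\ell}$ means $\omega_m^2-1\notin\ell\heltal{K}$, i.e.\ \emph{not all} eigenvalues of $\frob^{2m}$ are $1$. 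This is automatic whenever $p^{2m}\not\equiv1$ and says nothing about the pair $\{\mu,p/\mu\}$ individually, so it cannot by itself rule out $\mu^m\equiv1$. The paper's actual mechanism is number-theoretic and you never invoke it: primitivity of $K$ together with $\omega_m^2\not\equiv1$ forces $P_m(X)$ to be irreducible over $\QQ$ (the paper's Lemma~7, which eliminates the cases $\omega_m\in\QQ$ and $\omega_m^2=p^m$ and uses that $\gal(K/\QQ)$ is not bicyclic), and then a factorization $\bar{P}_m=(X-1)^2(X-p^m)^2$ with repeated irreducible factors would force $\ell$ to ramify in $K$ (Dedekind/Neukirch), contradicting the hypothesis. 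Relatedly, your opening claim that $P$ is irreducible ``because $K$ is a field, so $K=\QQ(\frob)$'' is unjustified --- $\QQ(\frob_m)$ could a priori be a proper subfield of $K$, and ruling that out is exactly where primitivity and the congruence hypothesis are consumed. Without supplying these two ingredients (irreducibility of $P_m$ and the ramification argument), part~(i) is not proved.
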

}

In the following, let $\bar{P}_m\in(\ZZ/\ell\ZZ)[X]$ be the characteristic polynomial of the restriction of $\frob_m$
to~$\jac[\ell]$. The proof of Theorem~\ref{teo:main} uses a number of lemmas.

\begin{lemma}\label{lem:unramified}
Let notation and assumptions be as in Theorem~\ref{teo:main}. Let $\imath:\heltal{K}\to\End(\jac)$ be an isomorphism.
Consider a number $\alpha\in\heltal{K}$. If $\ker[\ell]\subseteq\ker(\imath(\alpha)^n)$ for some number $n\in\NN$, then
$\ker[\ell]\subseteq\ker(\imath(\alpha))$.
\end{lemma}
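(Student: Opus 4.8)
The plan is to reduce the statement to a question about the finite ring $R := \heltal{K}/\ell\heltal{K}$, and then to exploit that this ring is reduced precisely because $\ell$ is unramified in $K$. First I would observe that $\imath(\alpha)$ carries the $\ell$-torsion $\ker[\ell]=\jac[\ell]$ into itself, so it restricts to an $\FF_\ell$-linear endomorphism $M$ of the four-dimensional $\FF_\ell$-vector space $\jac[\ell]$. Since $\imath$ is a ring isomorphism and $\ell\in\ZZ\subseteq\heltal{K}$, the endomorphism $\imath(\ell)$ is multiplication by $\ell$, which annihilates $\jac[\ell]$; hence every element of $\ell\heltal{K}$ acts as zero on $\jac[\ell]$, and the action of $\heltal{K}$ factors through a ring homomorphism $\rho\colon R\to\End_{\FF_\ell}(\jac[\ell])$. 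In this language the hypothesis $\ker[\ell]\subseteq\ker(\imath(\alpha)^n)$ says exactly that $\rho(\bar\alpha)^n=M^n=0$, i.e.\ that $M$ is nilpotent, and the desired conclusion is that $\rho(\bar\alpha)=M=0$.

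Next I would bring in the unramifiedness of $\ell$. Because $\ell$ is unramified in $K$, we have a factorization $\ell\heltal{K}=\mathfrak{p}_1\cdots\mathfrak{p}_g$ into distinct prime ideals, so by the Chinese Remainder Theorem $R\simeq\prod_{i=1}^{g}\heltal{K}/\mathfrak{p}_i$ is a product of finite fields $k_i$; in particular $R$ is reduced and carries orthogonal idempotents $e_1,\dots,e_g$. Writing $\jac[\ell]=\bigoplus_i e_i\jac[\ell]$, each summand $e_i\jac[\ell]$ is a vector space over the field $k_i$, on which $\bar\alpha$ acts as multiplication by the scalar $\bar\alpha_i\in k_i$, namely the image of $\alpha$ in $k_i$.

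Finally I would conclude summand by summand. Since $M$ is nilpotent, its restriction to each $e_i\jac[\ell]$ is nilpotent; but that restriction is multiplication by the field element $\bar\alpha_i$, and scalar multiplication on a vector space over a field is nilpotent only when the scalar is zero or the space is zero. In either case $M$ vanishes on $e_i\jac[\ell]$, hence $M=0$ on all of $\jac[\ell]$, which is precisely the assertion $\ker[\ell]\subseteq\ker(\imath(\alpha))$.

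The main obstacle is not a long computation but the conceptual translation: recasting the hypothesis as nilpotency of the reduced action $\rho(\bar\alpha)$, and then invoking the unramifiedness of $\ell$ to guarantee that $R$ has no nonzero nilpotents. This is exactly what rules out the pathological possibility of a nonzero element of $R$ acting nilpotently yet nontrivially. I would emphasize that the idempotent decomposition also sidesteps any need to prove that $\rho$ is injective, since a summand $e_i\jac[\ell]$ that happens to be zero causes no difficulty in the argument above.
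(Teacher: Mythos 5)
Your proof is correct, but it takes a genuinely different route from the paper's. The paper argues ``upstairs'' in the endomorphism ring: from $\ker[\ell]\subseteq\ker(\imath(\alpha)^n)$ it invokes the divisibility criterion for isogenies (an endomorphism whose kernel contains the full $\ell$-torsion factors as $\ell$ times another endomorphism, citing Milne), uses surjectivity of $\imath$ to rewrite that factor as $\imath(\beta)$, concludes $\alpha^n\in\ell\heltal{K}$, and then uses unramifiedness --- i.e.\ that $\ell\heltal{K}$ is a product of distinct primes and hence a radical ideal --- to get $\alpha\in\ell\heltal{K}$ and translate back. You instead argue ``downstairs'' on the module $\jac[\ell]$: the action factors through $R=\heltal{K}/\ell\heltal{K}$, unramifiedness makes $R$ a product of finite fields, and a nilpotent scalar on a vector space over a field must annihilate it. Both proofs ultimately rest on the same arithmetic input (that $R$ is reduced exactly when $\ell$ is unramified), but yours buys two things: it dispenses with the divisibility lemma for isogenies, and --- as you note --- the idempotent decomposition makes it irrelevant whether $\rho$ is faithful, so you never need the (true but nontrivial) fact that $\jac[\ell]$ is free of rank one over $R$. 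Conversely, your argument only uses that $\imath$ gives an action of $\heltal{K}$ on $\jac$, whereas the paper's one-line deduction $\tilde\beta=\imath(\beta)$ genuinely needs $\imath$ to be an isomorphism onto $\End(\jac)$; in that sense your proof is slightly more general, at the cost of a longer setup.
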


\begin{proof}
Since $\ker[\ell]\subseteq\ker(\imath(\alpha)^n)$, it follows that $\imath(\alpha)^n=\ell\tilde\beta$ for some
endomorphism $\tilde\beta\in\End(\jac)$; see e.g. \cite[Remark~7.12, p.~37]{milne:AV}. Notice that
$\tilde\beta=\frac{\imath(\alpha)^n}{\ell}=\imath(\beta)$ for some number $\beta\in\heltal{K}$. Hence,
$\alpha^n=\ell\beta\in\ell\heltal{K}$. Since~$\ell$ is unramified in $K$, it follows that $\alpha\in\ell\heltal{K}$. So
$\ker[\ell]\subseteq\ker(\imath(\alpha))$.
\end{proof}

\begin{lemma}\label{lem:Rank<=2}
Let notation and assumptions be as in Theorem~\ref{teo:main}. If $\omega_m\not\equiv 1\pmod{\ell}$, then
$\jac(\FF_{p^m})[\ell]$ is of rank at most two as a $\ZZ/\ell\ZZ$-module.
\end{lemma}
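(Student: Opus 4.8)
The plan is to identify $\jac(\FF_{p^m})[\ell]$ with the eigenspace for the eigenvalue $1$ of the restricted Frobenius $\bar\frob_m$ acting on $\jac[\ell]$, and then to bound the dimension of this eigenspace by two. A point of $\jac[\ell]$ is $\FF_{p^m}$-rational exactly when it is fixed by $\frob_m$, so $\jac(\FF_{p^m})[\ell]=\ker(\bar\frob_m-1)$, and its rank as a $\ZZ/\ell\ZZ$-module equals the dimension of the $1$-eigenspace of $\bar\frob_m$ on the four-dimensional $\FF_\ell$-vector space $\jac[\ell]$. It therefore suffices to show that this dimension is at most two.

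The key structural input I would establish is the module structure of $\jac[\ell]$ over $\heltal{K}\simeq\End(\jac)$. Since $\ell$ is unramified in $K$, the ideal $\ell\heltal{K}$ factors as a product $\mathfrak{p}_1\cdots\mathfrak{p}_r$ of distinct primes, so $\heltal{K}/\ell\heltal{K}\simeq\prod_{i=1}^r\FF_{\ell^{f_i}}$ with $\sum_i f_i=4$. The action of $\heltal{K}$ on $\jac[\ell]$ is faithful: if $\imath(\alpha)$ annihilates $\jac[\ell]$ then $\ker[\ell]\subseteq\ker(\imath(\alpha))$, whence $\alpha\in\ell\heltal{K}$ exactly as in the proof of Lemma~\ref{lem:unramified}. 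Thus $\jac[\ell]$ is a faithful module over $\prod_i\FF_{\ell^{f_i}}$, so each idempotent component acts nontrivially; comparing $\FF_\ell$-dimensions ($\sum_i f_i=4=\dim_{\FF_\ell}\jac[\ell]$) forces
    $$\jac[\ell]\simeq\prod_{i=1}^r\FF_{\ell^{f_i}}\simeq\heltal{K}/\ell\heltal{K}$$
as $\heltal{K}$-modules, i.e.\ $\jac[\ell]$ is free of rank one. Under this identification $\frob_m$ acts as multiplication by $\omega_m$, operating on the $i$-th factor by the scalar $\omega_m^{(i)}:=\omega_m\bmod\mathfrak{p}_i$. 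On each factor, multiplication by $\omega_m^{(i)}$ has eigenvalue $1$ if and only if $\omega_m^{(i)}=1$, in which case the whole factor lies in the $1$-eigenspace; hence
    $$\dim_{\FF_\ell}\jac(\FF_{p^m})[\ell]=\sum_{i:\,\omega_m^{(i)}=1}f_i,$$
and this same sum is the multiplicity of the root $1$ in $\bar P_m$. If the sum equals $4$, then $\omega_m^{(i)}=1$ for every $i$, so $\omega_m\equiv1\pmod{\ell}$, contrary to hypothesis; thus the multiplicity is at most three.

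It remains to exclude multiplicity three, which I expect to be the main obstacle. Here I would use the functional equation of the Weil polynomial: over $\FF_{p^m}$ the reduced Weil polynomial has the palindromic form
    $$\bar P_m(X)=X^4-a_1X^3+a_2X^2-a_1p^mX+p^{2m}\pmod{\ell},$$
so its constant term is $p^{2m}$ and, since $\ell\nmid p$ means $0$ is not a root, the multiset of roots is invariant under the involution $\lambda\mapsto p^m/\lambda$. Assume $\bar P_m(X)=(X-1)^3(X-\mu)$ with $\mu\neq1$. Comparing constant terms gives $\mu=p^{2m}$. The involution preserves multiplicities and sends the triple root $1$ to $p^m$, so $p^m$ is a root of multiplicity three; as $1$ is the only repeated root, $p^m=1$, whence $\mu=p^{2m}=1$, contradicting $\mu\neq1$. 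Therefore multiplicity three cannot occur, and so the dimension of $\jac(\FF_{p^m})[\ell]$ is at most two, as claimed. The two delicate points are securing the free rank-one module structure (so that the geometric multiplicity of the eigenvalue $1$ coincides with its algebraic multiplicity in $\bar P_m$) and ruling out the triple root; the remaining oddness and order hypotheses on $\ell$ are not needed for this particular lemma.
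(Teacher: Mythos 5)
Your proof is correct, and it takes a genuinely different route from the paper's. The paper excludes rank three by citing an external result to get $\ell\mid p^m-1$ in that case, writing $\bar\frob_m$ as a matrix with a $3\times 3$ identity block, computing $\det M\equiv\deg\frob_m=p^{2m}\equiv 1\pmod{\ell}$ to conclude $\bar P_m(X)=(X-1)^4$, and then invoking Lemma~\ref{lem:unramified} (Cayley--Hamilton plus unramifiedness) to force $\omega_m\equiv 1\pmod{\ell}$, a contradiction. You instead first establish the stronger structural fact that $\jac[\ell]$ is free of rank one over $\heltal{K}/\ell\heltal{K}\simeq\prod_i\FF_{\ell^{f_i}}$; the faithfulness-plus-dimension-count argument is sound, and unramifiedness enters exactly where it must, to make that quotient a product of fields. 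This gives semisimplicity of $\bar\frob_m$ for free, so the geometric and algebraic multiplicities of the eigenvalue $1$ coincide; multiplicity four is excluded by the hypothesis $\omega_m\not\equiv 1\pmod{\ell}$, and multiplicity three by the involution $\lambda\mapsto p^m/\lambda$ on the root multiset coming from the functional equation of the Weil polynomial (this step correctly uses $\ell\nmid p$ so that $0$ is not a root). What your approach buys is self-containedness --- it avoids the Handbook citation and the ad hoc determinant computation --- and a reusable module-theoretic fact that subsumes the role of Lemma~\ref{lem:unramified} on $\jac[\ell]$; what the paper's approach buys is brevity, given its external references. Your closing observation that the hypotheses $\ell$ odd and $\ell\nmid p-1$ are not needed for this lemma is also accurate.
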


\begin{proof}
Since $\ell\mid|\jac(\FF_p)|$, $1$ is a root of $\bar{P}_m$. Assume that $1$ is a root of $\bar{P}_m$ of
multiplicity~$\nu$. Since the roots of $\bar{P}_m$ occur in pairs $(\alpha,p^m/\alpha)$, also $p^m$ is a root of
$\bar{P}_m$ of multiplicity~$\nu$.

If~$\jac(\FF_{q^m})[\ell]$ is of rank three as a $\ZZ/\ell\ZZ$-module, then $\ell$ divides $ q^m-1$ by
\cite[Proposition~5.78, p.~111]{hhec}. Choose a basis $\mathcal{B}$ of $\jac[\ell]$. With respect to $\mathcal{B}$,
$\frob_m$ is represented by a matrix of the form
    $$M=\begin{bmatrix}
        1 & 0 & 0 & m_1 \\
        0 & 1 & 0 & m_2 \\
        0 & 0 & 1 & m_3 \\
        0 & 0 & 0 & m_4
        \end{bmatrix}.
    $$
Now, $m_4=\det M\equiv\deg\frob_m=p^{2m}\equiv 1\pmod{\ell}$, so $\bar{P}_m(X)=(X-1)^4$. Since~$\ell$ is unramified in
$K$, it follows that $\omega_m\equiv 1\pmod{\ell}$; cf. Lemma~\ref{lem:unramified}. This is a contradiction. So
$\jac(\FF_{p^m})[\ell]$ is of rank at most two as a $\ZZ/\ell\ZZ$-module.
\end{proof}

\begin{lemma}\label{lem:Pirreducible}
Let notation and assumptions be as in Theorem~\ref{teo:main}. If $\omega_m^2\not\equiv 1\pmod{\ell}$, then $P(X)$ is
irreducible.
\end{lemma}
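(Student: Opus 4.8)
The plan is to convert irreducibility of $P$ into a question about which subfield of $K$ the Frobenius generates. Write $\frob=\imath(\pi)$ with $\pi\in\heltal{K}$; identifying $\frob$ with a $p$-Weil number as in Definition~\ref{definition:WeilNumber}, we have $\pi\bar\pi=p$, where the bar is complex conjugation (the generator of $\gal(K/K_0)$). Since $\End(\jac)\simeq\heltal{K}$ and $[K:\QQ]=4$, the rational Tate module $V_\ell\jac=T_\ell\jac\otimes\QQ_\ell$ is free of rank one over $K\otimes_\QQ\QQ_\ell$, and $\frob$ acts on it as multiplication by $\pi$. Hence the Weil polynomial $P(X)$, being the characteristic polynomial of this action, equals the characteristic polynomial of multiplication by $\pi$ on $K$ over $\QQ$, namely
\[
P(X)=m_\pi(X)^{[K:\QQ(\pi)]},
\]
where $m_\pi$ is the minimal polynomial of $\pi$ over $\QQ$. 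Thus $P$ is irreducible exactly when $\QQ(\pi)=K$, and it suffices to exclude every proper subfield of $K$ as a candidate for $\QQ(\pi)$.

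Since $K$ is a quartic CM field with real quadratic subfield $K_0$, the proper subfields are $\QQ$, the field $K_0$, and, only when $K$ is biquadratic, the imaginary quadratic subfields. First, $\QQ(\pi)\neq\QQ$: otherwise $\pi\in\heltal{K}\cap\QQ=\ZZ$ and $\pi\bar\pi=\pi^2=p$, which is impossible. Next, $\QQ(\pi)\neq K_0$: if $\pi\in K_0$ then $\bar\pi=\pi$, so $\pi^2=p$ and $m_\pi(X)=X^2-p$, whence $P(X)=(X^2-p)^2$. But then $|\jac(\FF_p)|=P(1)=(1-p)^2$, and $\ell\mid|\jac(\FF_p)|$ forces $\ell\mid p-1$, contradicting the hypothesis $\ell\nmid p-1$.

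The remaining, and main, case is $\QQ(\pi)$ an imaginary quadratic field, which can occur only if $K$ itself contains an imaginary quadratic subfield, equivalently if $K/\QQ$ is Galois with group $\ZZ/2\ZZ\times\ZZ/2\ZZ$. Since $K$ is primitive (Definition~\ref{def:CMfieldPrimitive}), $K/\QQ$ is non-Galois or cyclic, and in both situations the only quadratic subfield of $K$ is the totally real $K_0$; no imaginary quadratic subfield exists. This excludes the last case, so $\QQ(\pi)=K$ and $P$ is irreducible. I expect this step to be the crux: the congruence conditions $\ell\nmid p-1$ and $\omega_m^2\not\equiv 1\pmod{\ell}$ cannot by themselves prevent $\pi$ from lying in an imaginary quadratic subfield, for there $P=m_\pi^2$ is genuinely reducible yet compatible with both congruences, so the obstruction must come from the primitivity of $K$. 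Equivalently one may observe that $\End^0(\jac)\simeq K$ is a field, so $\jac$ is simple and $\QQ(\pi)$ is its center $K$; the elementary subfield analysis above makes the dependence on $\ell\nmid p-1$ and on primitivity explicit without invoking that structure theory.
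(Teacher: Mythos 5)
Your argument is correct for the statement as literally written, and it is essentially the paper's argument: reduce to ``the characteristic polynomial of Frobenius is a power of an irreducible'' and then rule out, one by one, the possibilities that the Frobenius generates $\QQ$, the real quadratic field $K_0$, or an imaginary quadratic field, with primitivity of $K$ disposing of the last case exactly as the paper does. (Your route to the power-of-an-irreducible statement, via freeness of $V_\ell\jac$ over $K\otimes_\QQ\QQ_\ell$, is a self-contained substitute for the paper's citation of Milne--Waterhouse for simple abelian varieties; the two are interchangeable here.)

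One substantive caveat. You never use the hypothesis $\omega_m^2\not\equiv 1\pmod{\ell}$: in the case $\QQ(\pi)=K_0$ you appeal instead to the standing assumption $\ell\nmid p-1$. That suffices for $P=P_1$, but the paper's own proof, and the way the lemma is invoked in the proof of Theorem~\ref{teo:main}, concern $P_m$, the characteristic polynomial of $\frob_m=\frob^m$ --- the ``$P(X)$'' in the lemma's conclusion is evidently a slip for ``$P_m(X)$''. For general $m$ your subfield analysis must be applied to $\omega_m=\pi^m$ rather than to $\pi$, and in the totally real case one gets $P_m(X)=(X^2-p^m)^2$ and hence $\ell\mid P_m(1)=(p^m-1)^2$, i.e.\ $\ell\mid p^m-1$; this is \emph{not} excluded by $\ell\nmid p-1$ (it holds whenever $k\mid m$), and it is precisely here that the hypothesis $\omega_m^2=p^m\not\equiv 1\pmod{\ell}$ is needed. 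Likewise, in the case $\QQ(\omega_m)=\QQ$ one concludes $\omega_m\equiv 1\pmod{\ell}$ from $\bar{P}_m(1)=0$, again contradicting $\omega_m^2\not\equiv 1\pmod{\ell}$. With those two substitutions your proof coincides with the paper's. A final small point: your closing remark that $\QQ(\pi)$ is the center of $\End^0(\jac)$ invokes Tate's theorem, which applies to the endomorphism algebra over the ground field; it yields $\QQ(\pi)=K$ only if all of $\heltal{K}$ acts over $\FF_p$, so the elementary subfield analysis you actually carry out is the safer (and the paper's) route.
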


\begin{proof}
The Jacobian $\jac$ is simple by Theorem~\ref{teo:simple}. Assume $P_m(X)$ is reducible. Then $P_m(X)=f(X)^e$ for
some integer $e\in\ZZ$ and some irreducible polynomial $f\in\ZZ[X]$ by \cite[Theorem~8, p.~58]{milne-waterhouse}.
Notice that $e\in\{2,4\}$. If $\omega_m\notin\RR$, then $\QQ(\omega_m)\subset K$ is an imaginary, quadratic number field
and $K$ is the composition of $K_0$ and $\QQ(\omega_m)$, i.e. $\gal(K/\QQ)$ is bicyclic. This is a contradiction. So
$\omega_m\in\RR$, i.e. $\omega_m^2=p^m$. If $\omega_m\in\QQ$, then $f(X)\equiv X-1\pmod{\ell}$ because $\bar{P}_m(1)=0$.
But then $\omega_m\equiv 1\pmod{\ell}$. This is a contradiction. So $\omega_m\notin\QQ$, $e=2$ and $f(X)=X^2-p^m$.
Hence, $\bar{P}_m(X)=(X^2-p^m)^2$. Since $\bar{P}_m(1)=0$, it follows that $\omega_m^2=p^m\equiv 1\pmod{\ell}$. This is
a contradiction. So $P_m(X)$ is irreducible.
\end{proof}

\begin{proof}[Proof of Theorem~\ref{teo:main}]
Assume that $\jac(\FF_{p^m})[\ell]$ is bicyclic. If $p^m\not\equiv 1\pmod{\ell}$, then $1$ is a root of $\bar{P}_m$ of
multiplicity two, i.e. $\bar{P}_m(X)=(X-1)^2(X-p^m)^2$. $P(X)$ is irreducible by Lemma~\ref{lem:Pirreducible}. Hence,
by \cite[Proposition~8.3, p.~47]{neukirch} it follows that~$\ell$ ramifies in $K$. This is a contradiction. So
$p^m\equiv 1\pmod{\ell}$, i.e. $\ell\mid p^m-1$.

On the other hand, if $\ell\mid p^m-1$, then the Tate pairing is non-degenerate on $\jac(\FF_{p^m})[\ell]$. So
$\jac(\FF_{p^m})[\ell]$ must be of rank at least two as a $\ZZ/\ell\ZZ$-module, since $\ell\nmid p-1$. Hence,
$\jac(\FF_{p^m})[\ell]$ is bicyclic by Lemma~\ref{lem:Rank<=2}. The proof of Theorem~\ref{teo:main},
part~\eqref{teo:main:1} is established.

Now let $m=k$. If $\omega_k\equiv 1\pmod{\ell}$, then $\jac[\ell]=\jac(\FF_{p^k})[\ell]$, and~\eqref{teo:main:2}
follows. Assume that $\omega_k\not\equiv 1\pmod{\ell}$. Let $U=\jac(\FF_p)[\ell]$ and $V=\ker(\frob-p)\cap\jac[\ell]$,
where $\frob$ is the $p$-power Frobenius endomorphism on $\jac$. Then
$V=\jac(\FF_{p^k})[\ell]\setminus\jac(\FF_p)[\ell]$ by Lemma~\ref{lem:Rank<=2}, and
    $$\jac(\FF_{p^k})[\ell]\simeq U\oplus V\simeq\ZZ/\ell\ZZ\times\ZZ/\ell\ZZ.$$
By \cite{rubin-silverberg07}, the Weil-pairing $e_W$ is non-degenerate on $U\times V$. Now let
$x\in\jac(\FF_{p^k})[\ell]$ be an arbitrary $\FF_{p^k}$-rational point of order~$\ell$. Write $x=x_U+x_V$, where
$x_U\in U$ and $x_V\in V$. Choose $y\in V$ and $z\in U$, such that $e_W(x_U,y)\neq 1$ and $e_W(x_V,z)\neq 1$. We may
assume that $e_W(x_U,y)\cdot e_W(x_V,z)\neq 1$; if not, replace $z$ by $2z$. Since the Weil-pairing is anti-symmetric,
$e_W(x_U,z)=e_W(x_V,y)=1$. Hence,
    $$e_W(x,y+z)=e_W(x_U,y)\cdot e_W(x_V,z)\neq 1.$$
\end{proof}

\bibliographystyle{plain}
\bibliography{references}

\begin{thebibliography}{10}

\bibitem{atkin-morain}
A.O.L. Atkin and F.~Morain.
\newblock Elliptic curves and primality proving.
\newblock {\em Math. Comp.}, 61:29--68, 1993.

\bibitem{balasubramanian}
R.~Balasubramanian and N.~Koblitz.
\newblock The improbability that an elliptic curve has subexponential discrete
  log problem under the menezes-okamoto-vanstone algorithm.
\newblock {\em J. Cryptology}, 11:141--145, 1998.

\bibitem{boneh-franklin}
D.~Boneh and M.~Franklin.
\newblock Identity-based encryption from the weil pairing.
\newblock {\em SIAM J. Computing}, 32(3):586--615, 2003.

\bibitem{cassels}
J.W.S. Cassels and E.V. Flynn.
\newblock {\em Prolegomena to a Middlebrow Arithmetic of Curves of Genus~$2$}.
\newblock London Mathematical Society Lecture Note Series. Cambridge University
  Press, 1996.

\bibitem{hhec}
G.~Frey and T.~Lange.
\newblock Varieties over special fields.
\newblock In H.~Cohen and G.~Frey, editors, {\em Handbook of Elliptic and
  Hyperelliptic Curve Cryptography}, pages 87--113. Chapman \& Hall/CRC, 2006.

\bibitem{galbraith01}
S.D. Galbraith.
\newblock Supersingular curves in cryptography.
\newblock In {\em Advances in Cryptology -- Asiacrypt 2001}, volume 2248 of
  {\em Lecture Notes in Computer Science}, pages 495--513. Springer, 2001.

\bibitem{galbraith05}
S.D. Galbraith.
\newblock Pairings.
\newblock In I.F. Blake, G.~Seroussi, and N.P. Smart, editors, {\em Advances in
  Elliptic Curve Cryptography}, volume 317 of {\em London Mathematical Society
  Lecture Note Series}, pages 183--213. Cambridge University Press, 2005.

\bibitem{galbraith07}
S.D. Galbraith, F.~Hess, and F.~Vercauteren.
\newblock Hyperelliptic pairings.
\newblock In {\em Pairing 2007}, Lecture Notes in Computer Science, pages
  108--131. Springer, 2007.

\bibitem{gaudry}
P.~Gaudry, T.~Houtmann, D.~Kohel, C.~Ritzenthaler, and A.~Weng.
\newblock The $p$-adic cm-method for genus 2, 2005.

\bibitem{hess}
F.~Hess.
\newblock A note on the tate pairing of curves over finite fields.
\newblock {\em Arch. Math.}, 82:28--32, 2004.

\bibitem{koblitz87}
N.~Koblitz.
\newblock Elliptic curve cryptosystems.
\newblock {\em Math. Comp.}, 48:203--209, 1987.

\bibitem{koblitz89}
N.~Koblitz.
\newblock Hyperelliptic cryptosystems.
\newblock {\em J. Cryptology}, 1:139--150, 1989.

\bibitem{lang59}
S.~Lang.
\newblock {\em Abelian Varieties}.
\newblock Interscience, 1959.

\bibitem{miller-algorithm}
V.S. Miller.
\newblock Short programs for functions on curves, 1986.
\newblock Unpublished manuscript, available at
  \url{http://crypto.stanford.edu/miller/miller.pdf}.

\bibitem{miller}
V.S. Miller.
\newblock The weil pairing, and its efficient calculation.
\newblock {\em J. Cryptology}, 17:235--261, 2004.

\bibitem{milne:AV}
J.S. Milne.
\newblock Abelian varieties, 1998.
\newblock Available at \url{http://www.jmilne.org}.

\bibitem{milne-waterhouse}
J.S. Milne and W.C. Waterhouse.
\newblock Abelian varieties over finite fields.
\newblock {\em Proc. Symp. Pure Math.}, 20:53--64, 1971.

\bibitem{neukirch}
J.~Neukirch.
\newblock {\em Algebraic Number Theory}.
\newblock Springer, 1999.

\bibitem{rubin-silverberg07}
K.~Rubin and A.~Silverberg.
\newblock Using abelian varieties to improve pairing-based cryptography, 2007.
\newblock Preprint, available at
  \url{http://www.math.uci.edu/~asilverb/bibliography/}.

\bibitem{shi}
G.~Shimura.
\newblock {\em Abelian Varieties with Complex Multiplication and Modular
  Functions}.
\newblock Princeton University Press, 1998.

\bibitem{sil}
J.H. Silverman.
\newblock {\em The Arithmetic of Elliptic Curves}.
\newblock Springer, 1986.

\bibitem{spallek}
A.-M. Spallek.
\newblock {\em Kurven vom Geschlecht $2$ und ihre Anwendung in
  Public-Key-Kryptosystemen}.
\newblock PhD thesis, Institut für Experimentelle Mathe\-matik, Universität GH
  Essen, 1994.

\bibitem{weng03}
A.~Weng.
\newblock Constructing hyperelliptic curves of genus~2 suitable for
  cryptography.
\newblock {\em Math. Comp.}, 72:435--458, 2003.

\end{thebibliography}

\end{document}